\definecolor{darkblue}{rgb}{0.0,0.0,0.3}
\theoremstyle{plain}
\newtheorem{theorem}{Theorem}[section]
\newtheorem*{theorem*}{Theorem}
\newtheorem{lemma}[theorem]{Lemma}
\newtheorem{proposition}[theorem]{Proposition}
\newtheorem*{proposition*}{Proposition}
\newtheorem*{corollary*}{Corollary}
\newtheorem{claim}[theorem]{Claim}
\theoremstyle{definition}
\newtheorem{example}[theorem]{Example}
\numberwithin{equation}{section}
\newtheoremstyle{named}{}{}{\itshape}{}{\bfseries}{.}{.5em}{\thmnote{#3}}
\theoremstyle{named}
\title{Arithmetic Progressions with Restricted Digits}
\author[Aled Walker]{Aled Walker}
\address{Trinity College, Cambridge, CB2 1TQ, United Kingdom}
\email {aledwalker@gmail.com}
\author[Alexander Walker]{Alexander Walker}
\address{Department of Mathematics, Rutgers University, Hill Center - Busch Campus, 110 Frelinghuysen Road, Piscataway, NJ 08854-8019, USA}
\begin{document}

\begin{abstract}
For an integer $b \geqslant 2$ and a set $S\subset \{0,\cdots,b-1\}$, we define the \emph{Kempner set} $\mathcal{K}(S,b)$ to be the set of all non-negative integers whose base-$b$ digital expansions contain only digits from $S$. These well-studied sparse sets provide a rich setting for additive number theory, and in this paper we study various questions relating to the appearance of arithmetic progressions in these sets. In particular, for all $b$ we determine exactly 
the maximal length of an arithmetic progression that omits a base-$b$ digit.
\end{abstract}

\maketitle

\section{Introduction}

In 1914 Kempner \cite{Ke14} introduced a variant of the harmonic series which excluded from its sum all those positive integers that contain the digit $9$ in their base-$10$ expansions.  Unlike the familiar harmonic series, Kempner's modified series converges (the limit later shown to be $\approx 22.92$, see \cite{Ba79}).  A simple generalisation of Kempner's original argument shows that convergence occurs as long as any non-empty set of digits is excluded, and that this result holds in any base (see \cite{BS08}, for example).

Let us introduce some notation to describe these results in general. Fix an integer $b \geqslant 2$ and a subset of integers $S \subseteq [0,b-1]$. Here and throughout the paper, for two integers $x$ and $y$ we use $[x,y]$ to denote the set $\{n\in\mathbb{Z}:x \leqslant n\leqslant y\}$.  We then define the \emph{Kempner set} $\mathcal{K}(S,b)$ to be the set of non-negative integers that, when written in base $b$, contain only digits from $S$. Thus $\mathcal{K}([0,8],10)$ denotes the set originally studied by Kempner. We will assume throughout that $0 \in S$, to avoid the ambiguity of leading zeros, and require $S \neq [0,b-1]$, to preclude the trivial set $\mathcal{K}([0,b-1],b)$ (which is nothing more than $\mathbb{Z}_{\geqslant 0}$). These sets $S$ will be referred to as the \emph{permitted} sets $S$, and the related Kempner sets $K(S,b)$ as \emph{proper} Kempner sets. 

The arithmetic properties of proper Kempner sets have been the object of considerable study in recent years, beginning with the work of Erd\H{o}s, Mauduit, and S\'ark\"ozy, who studied the distribution of residues in $\mathcal{K}(S,b)$ moduli small numbers \cite{EMS98} and proved the existence of integers in $\mathcal{K}(S,b)$ with many small prime factors \cite{EMS99}.
Notable recent work includes Maynard's proof \cite{May16} that the sets $\mathcal{K}(S,b)$ contain infinitely many primes whenever $b-\vert S\vert$ is at most $b^{23/80}$, provided $b$ is sufficiently large. 

In this paper we consider the additive structure of proper Kempner sets. In particular, we consider the following extremal question: \emph{what is the length of the longest arithmetic progression in a proper Kempner set with a fixed given base?} Our methods will be combinatorial, rather than analytic (as in Maynard's work, \cite{May16}). 

A well known conjecture of Erd\H{o}s-Tur\'{a}n (first given in \cite{ErTu36}) states that any set of positive integers with a divergent harmonic sum contains arithmetic progressions of arbitrary (finite) length. Since proper Kempner sets have convergent harmonic sums, this might suggest that the lengths of arithmetic progressions in a given proper Kempner set are uniformly bounded.

This is indeed the case.  Let us say that a set $T\subset \mathbb{Z}$ is $k$-free if $T$ contains no arithmetic progression of length $k$. By a simple argument, given in Proposition \ref{prop:ell_finite}, one may show that the proper Kempner set $\mathcal{K}(S,b)$ is $(b^2-b+1)$-free for any $b \geqslant 2$.

The main purpose of this article is to understand how close this trivial upper bound is to the truth. \\

In our main theorem, we improve this bound for all $b> 2$, obtaining a tight result that expresses the length of the longest arithmetic progression in $\mathcal{K}(S,b)$ in terms of the prime factorisation of $b$. To state this theorem, we need to introduce some arithmetic functions. If $n$ and $b$ are natural numbers, let $\rho(n)$ denote the square-free radical of $n$ (ie. the product of all distinct primes dividing $n$), and let $\beta(b)$ denote the largest integer less than $b$ such that $\rho(\beta(b))\vert b$.
For example, $\beta(10) = 8$, and $\beta(p^k) = p^{k-1}$ for any prime power $p^k$. In other words, $\beta(b)$ is the greatest integer less than $b$ that divides some power of $b$. Finally, let $\ell(b)$ be the length of the longest arithmetic progression contained in some proper Kempner set of base $b$. 

Our main theorem gives an exact evaluation of $\ell(b)$.

\begin{theorem}
\label{thm:main theorem} For all $b\geqslant 2$, one has $\ell(b) = (b-1)\beta(b)$.
\end{theorem}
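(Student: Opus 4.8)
The plan is to prove the two inequalities $\ell(b)\geqslant(b-1)\beta(b)$ and $\ell(b)\leqslant(b-1)\beta(b)$ separately; since discarding further digits only shortens arithmetic progressions, it suffices throughout to work with $S=[0,b-1]\setminus\{\delta\}$ for a single omitted digit $\delta\in[1,b-1]$.

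\textbf{Lower bound.} Write $\beta=\beta(b)$ and fix any $m\geqslant 0$ with $\beta\mid b^m$ (such $m$ exists since $\rho(\beta)\mid b$); put $d=b^m/\beta\in\mathbb{Z}_{>0}$ and $\delta=b-1$. I claim the progression $P=\{jd:0\leqslant j\leqslant(b-1)\beta-1\}$, of length $(b-1)\beta$, lies in $\mathcal{K}([0,b-2],b)$. For such $j$ we have $jd\leqslant((b-1)\beta-1)b^m/\beta<(b-1)b^m$, so writing $jd=c\,b^m+r$ with $0\leqslant r<b^m$ forces $0\leqslant c\leqslant b-2$; this controls all digits in positions $\geqslant m$. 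For the lower positions, $r=jd\bmod b^m=(b^m/\beta)(j\bmod\beta)=s\,b^m/\beta$ where $s:=j\bmod\beta\in[0,\beta-1]$, and since $\beta\mid b^m$ the rational $s/\beta$ has a terminating base-$b$ expansion $s/\beta=\sum_{i=1}^{m}c_ib^{-i}$, whose digits $c_1,\dots,c_m$ are precisely the digits of $r$ in positions $m-1,\dots,0$. A short induction shows each $c_i\leqslant b-2$: the leading digit is $c_1=\lfloor sb/\beta\rfloor$, and $c_1=b-1$ would force $s\geqslant(b-1)\beta/b=\beta-\beta/b>\beta-1$, impossible for an integer $s<\beta$; the tail $\{sb/\beta\}=s'/\beta$ has the same form with $s'=sb\bmod\beta\in[0,\beta-1]$, so the induction continues. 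Hence every digit of every term of $P$ is at most $b-2$, giving $\ell(b)\geqslant(b-1)\beta(b)$. (The next term $(b-1)\beta\cdot d=(b-1)b^m$ has digit $b-1$ in position $m$, so the construction is not wasteful, though this is not needed.)

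\textbf{Upper bound.} Suppose $P=\{a+jd:0\leqslant j<L\}$ avoids $\delta$ in base $b$; we must show $L\leqslant(b-1)\beta(b)$. First reduce: if $b\mid d$ then the units digit of every term is the fixed value $a\bmod b$, and stripping it off exhibits $P$ as an AP of difference $d/b$ avoiding $\delta$, so by descent we may assume $b\nmid d$; and if $\gcd(d,b)=1$ then any $b$ consecutive terms have units digits running over all residues mod $b$, forcing $L\leqslant b-1\leqslant(b-1)\beta(b)$. In the remaining case, choose the position $k$ with $b^{k-1}\leqslant d<b^k$ and consider the non-decreasing sequence $q_j=\lfloor(a+jd)/b^k\rfloor$: its consecutive increments lie in $\{0,1\}$, and since the base-$b$ digit of $a+jd$ in position $k$ equals $q_j\bmod b$, avoidance of $\delta$ forces $q_{L-1}-q_0\leqslant b-2$, i.e.\ $[0,L)$ splits into at most $b-1$ maximal runs on each of which $q_j$ is constant. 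On such a run the lower $k$ digits of $a+jd$ trace out an arithmetic progression of difference $d$ inside an interval of length $b^k$ that avoids $\delta$; writing $M$ for the maximal length of any such progression we get $L\leqslant(b-1)M$, reducing the problem to the bound $M\leqslant\beta(b)$.

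\textbf{Bounding a block, and the main obstacle.} To bound $M$ one analyses carry propagation when $d$ is added repeatedly inside a window of length $b^k$. Since $d\geqslant b^{k-1}$, the position-$(k-1)$ digits along a block are strictly increasing, so a block has at most $b-1$ terms; the refinement to $\beta(b)$ comes from simultaneously tracking the contribution $e=d\bmod b^{k-1}$ to the lower $k-1$ digits and its interaction with the carries, yielding a recursion in $k$ in which the common difference passes from $d$ to $e$. The length of the extremal block is ultimately governed by the least $k_0$ for which $\gcd(d,b^{k_0})$ equals the $b$-smooth part $g$ of $d$, together with the quotient $b^{k_0}/g$: because $b^{k_0}/g$ divides $b^{k_0}$ its radical divides $b$, so once one shows $b^{k_0}/g<b$ the defining maximality of $\beta(b)$ gives $b^{k_0}/g\leqslant\beta(b)$, while in the residual regime where $g$ is ``far'' from a power of $b$ a separate, more elementary estimate (exploiting that a low digit of $P$ is confined to an arithmetic progression modulo $b$ of small difference $\gcd(d,b)$) keeps $L$ below $(b-1)\beta(b)$. \textbf{The main obstacle is precisely this step:} proving the per-block bound by \emph{exactly} $\beta(b)$, uniformly over all $d$ and $\delta$, and reconciling the two regimes --- this is where the arithmetic of $\beta(b)$ (maximality among integers below $b$ whose radical divides $b$) enters essentially, and it demands the careful carry bookkeeping above rather than the crude counting that yields only the $(b^2-b+1)$-free bound of Proposition \ref{prop:ell_finite}.
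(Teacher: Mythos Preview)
Your lower bound is correct and is essentially the paper's own construction (Proposition~\ref{prop:AP construction}), presented via the terminating base-$b$ expansion of $s/\beta$ rather than the gcd inequality~\eqref{separation condition}; both arguments are fine. Your initial reductions for the upper bound---passing to $b\nmid d$ by stripping a trailing digit, and disposing of $\gcd(d,b)=1$ via the units digit---also match the paper's opening moves.

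The gap is in the heart of the upper bound. You decompose $[0,L)$ into at most $b-1$ runs on which $q_j=\lfloor(a+jd)/b^k\rfloor$ is constant, bound each run by some $M$, and then assert that the remaining task is to show $M\leqslant\beta(b)$. But this inequality is simply false. Take $b=10$, $d=102$ (so $b\nmid d$, $\gcd(d,b)=2>1$, $k=3$) and $\delta=9$: the nine terms $0,102,204,306,408,510,612,714,816$ all lie in $[0,1000)$ and all omit the digit $9$, so $M\geqslant 9>8=\beta(10)$. Thus the product bound $L\leqslant(b-1)M$ combined with the hoped-for $M\leqslant\beta(b)$ cannot be the mechanism; with the true per-block bound $M\leqslant b-1$ you recover only $(b-1)^2$, i.e.\ Proposition~\ref{prop:ell_finite}. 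Your final paragraph gestures at a recursion $d\mapsto e=d\bmod b^{k-1}$ and at the $b$-smooth part of $d$, but as written this is a wish list, not an argument, and the counterexample above shows that no amount of bookkeeping will rescue the specific inequality $M\leqslant\beta(b)$.

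The paper's proof of the upper bound (Proposition~\ref{prop:upper_bound}) takes a genuinely different route. Writing $\Delta_k$ for the bottom $k{+}1$ digits of $d$, it proves by induction on $k$ that \emph{either} $\ell(b)\leqslant(b-1)\beta(b)$ already, \emph{or} there exist coprime $\lambda_k,\mu_k\in[1,b-1]$ with $\lambda_k\Delta_k=\mu_k b^{k+1}$. The key quantity is $\lambda_k$, the additive order of $\Delta_k/b^{k+1}$ in $\mathbb{R}/\mathbb{Z}$: when $\lambda_k\geqslant b$, equidistribution of the orbit forces at least $\lfloor\lambda_k/b\rfloor$ of the first $\lambda_k$ terms to have $b^k$-digit equal to $\delta$, giving directly $\ell(b)\leqslant\lambda_k-\lfloor\lambda_k/b\rfloor$; and the inductive hypothesis yields $\lambda_k\mid b\lambda_{k-1}$ with $\lambda_{k-1}\mid b^k$ and $\lambda_{k-1}<b$, whence $\lambda_{k-1}\leqslant\beta(b)$ and $\lambda_k\leqslant b\beta(b)$, so $\lambda_k-\lfloor\lambda_k/b\rfloor\leqslant(b-1)\beta(b)$. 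The point is that $\beta(b)$ enters not as a per-block length but as a bound on the \emph{denominator} $\lambda_{k-1}$, and the final estimate $\lambda_k-\lfloor\lambda_k/b\rfloor$ couples the ``number of blocks'' and ``block length'' in a single step rather than multiplying two separate bounds.
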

\noindent For example, $\ell(10)=72$. One particular set that achieves this bound is Kempner's original set, $\mathcal{K}([0,8],10)$, which contains the $72$-term arithmetic progression $\{0,125,250,375,\cdots,8875\}$.\\

The arithmetic functions $\beta(b)$ and $\ell(b)$ are of independent interest, but do not appear to have been considered seriously before.\footnote{The sequence $\beta(b)$ is entry A079277 on the Online Encyclopedia of integer Sequences.} We establish average order results for $\beta(b)$ which show that, for most $b$, the trivial upper bound on $\ell(b)$ from Proposition \ref{prop:ell_finite} is asymptotically correct. 

\begin{theorem}
\label{thm: ell nearly always maximal epsilon free}
There is a set of integers $A\subset \mathbb{Z}$ with natural density $1$, i.e. with \[ \lim\limits_{N\rightarrow \infty} \frac{1}{N}\vert A\cap [1,N]\vert  = 1,\] such that $\ell(b)\sim b^2$ as $b\rightarrow \infty$ in $A$.
\end{theorem}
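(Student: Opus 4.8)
The plan is to derive this as a density-$1$ refinement of Theorem~\ref{thm:main theorem}. Since $\ell(b)=(b-1)\beta(b)$ and $\beta(b)<b$ for every $b$, the statement $\ell(b)\sim b^{2}$ along a density-$1$ set is equivalent to $\beta(b)/b\to1$ as $b\to\infty$ through such a set; and by the very definition of $\beta$, it suffices to produce, for almost every $b$, an integer $m<b$ with $\rho(m)\mid b$ and $m\ge(1-\varepsilon(b))b$ for one fixed function $\varepsilon(b)\to0$.

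I would manufacture such an $m$ from only two prime factors of $b$. Fix a threshold $y=y(b)$ tending to infinity very slowly --- say $y(b)=\exp\!\big((\log\log b)^{1/3}\big)$, which turns out to be small enough for the Diophantine input below --- and let $A$ be the set of $b$ divisible by at least two primes $p<q\le y(b)$. That $A$ has density $1$ is a routine second moment (Tur\'an--Kubilius) estimate: for all but $o(N)$ of the $b\le N$ one has $\#\{p\le y(b):p\mid b\}\ge\tfrac12\log\log y(b)\to\infty$, which is $\ge2$ for $b$ large; one also discards the $o(N)$ integers with fewer than three prime factors in total. For $b\in A$ we have $\rho(p^{a}q^{c})=pq\mid b$ for all $a,c\ge0$, so everything reduces to showing that the largest integer of the form $p^{a}q^{c}$ that is $<b$ is already at least $(1-\varepsilon(b))b$.

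Passing to logarithms, that largest admissible $p^{a}q^{c}$ undershoots $\log b$ by exactly $\log p\cdot\min_{0\le c\le C}\{\log_{p}b-c\log_{p}q\}$, where $C=\lfloor\log b/\log q\rfloor$ and $\{\,\cdot\,\}$ denotes the fractional part. By the three-distance theorem this minimum is $O(1/q_{k})$, where $q_{k}$ is the largest continued-fraction denominator of the irrational $\log_{p}q$ that does not exceed $C$; and since $C\ge\log b/\log y$ dwarfs every fixed power of $\log y$, the whole problem comes down to showing that $q_{k}$ is not anomalously small --- equivalently, that $q^{s}$ is never absurdly close to $p^{t}$ for exponents with $q^{s}\le b$.

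This last point is the one genuinely non-elementary ingredient, and the step I expect to be the main obstacle. The trivial bound $|q^{s}-p^{t}|\ge1$ is hopelessly weak; what is required is Baker's theorem on linear forms in two logarithms, in the sharp form that makes the dependence on $p$ and $q$ explicit. This is precisely why $y$ has to be chosen so slowly growing: the resulting effective irrationality exponent $\kappa$ of $\log_{p}q$ then satisfies $\kappa\ll(\log p)(\log q)\ll(\log y)^{2}$, so $q_{k}\gg C^{1/\kappa}$, and the undershoot is $\ll\log y\cdot(\log b/\log y)^{-1/\kappa}\to0$ for our choice of $y$. (An alternative is to invoke the Erd\H{o}s conjecture on the concentration of divisors, proved by Maier and Tenenbaum: almost every $b$ has divisors $d<d'$ with $d'/d\to1$, whence $\beta(b)\ge bd/d'$; but that route is no more elementary.)
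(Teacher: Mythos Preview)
Your proposal is correct and follows the same architecture as the paper's proof: both take the density-$1$ set to consist of integers with at least two prime factors below a slowly growing threshold, and both ultimately invoke Baker's theorem on linear forms in logarithms to guarantee that some $p^{a}q^{c}$ lands just below $b$. The only differences are cosmetic---you reach density $1$ via Tur\'an--Kubilius where the paper uses a small sieve, you control the gaps via the three-distance theorem where the paper uses the Erd\H{o}s--Tur\'an discrepancy inequality, and you commit to an explicit $y(b)=\exp\bigl((\log\log b)^{1/3}\bigr)$ where the paper leaves its threshold function unspecified.
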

\noindent 
\textit{Notation:} For $x\in \mathbb{R}$, let $\{x\}$ denote the fractional part of $x$ and let $ \lfloor x \rfloor$ denote the greatest integer that is at most $x$. For a natural number $n$, we let $[n]$ denote the set of integers $\{1,\cdots,n\}$. As mentioned previously, for two integers $x$ and $y$ we use $[x,y]$ to denote the set $\{ n\in \mathbb{Z}: x\leqslant n\leqslant y\}$. We use the notation $\log_q p$ to denote the logarithm of $p$ to base $q$ (as opposed to any iterations of logarithms).\\

\section{Progressions of Maximal Length in Kempner Sets}

In this section we give our proof of Theorem \ref{thm:main theorem}, which is an exact evaluation of $\ell(b)$ and the main result of this paper.  This will be done in two parts: a constructive lower bound and a proof that this lower bound is sharp.  Before that, as promised, we give a simple proof that the function $\ell(b)$ is at least well-defined, i.e. that Kempner sets do not contain arbitrarily long arithmetic progressions.

\begin{proposition} \label{prop:ell_finite}
For all $b\geqslant 2$, we have $\ell(b) \leqslant (b-1)b$.
\end{proposition}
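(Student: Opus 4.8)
The plan is to reduce the statement to an easy counting argument about just one suitably chosen pair of adjacent base-$b$ digits of the terms. Suppose $a, a+d, \dots, a+(N-1)d$ lies in a proper Kempner set $\mathcal{K}(S,b)$; reversing the progression if need be we may take $d \geq 1$, and the case $N \leq 1$ is trivial. First I would fix a digit $t \in [0,b-1] \setminus S$, which exists since $S \neq [0,b-1]$ and satisfies $t \geq 1$ because $0 \in S$. I would then choose the integer $k \geq 0$ with $b^k \leq d < b^{k+1}$, set $D := \lfloor d/b^k \rfloor \in \{1,\dots,b-1\}$, and study the truncations $v_j := \lfloor (a+jd)/b^k \rfloor$ for $0 \leq j \leq N-1$. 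Two features of this truncation are what make the argument work: the coefficient of $b$ in the base-$b$ expansion of $v_j$ is the coefficient of $b^{k+1}$ in that of $a+jd$, hence is never $t$; and, writing $a+jd = v_j b^k + r_j$, $d = D b^k + s$ with $0 \leq r_j, s < b^k$, one gets $a+(j+1)d = (v_j + D) b^k + (r_j + s)$ with $r_j + s < 2b^k$, so the only carry into position $k$ is at most one unit and $v_{j+1} - v_j \in \{D, D+1\}$.

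Next I would pass to $\mathbb{Z}/b^2\mathbb{Z}$. The residues there whose coefficient of $b$ equals $t$ form a block $\mathcal{B} := \{tb, tb+1, \dots, tb+b-1\}$ of exactly $b$ consecutive residues, and by the first feature above $v_j \bmod b^2 \notin \mathcal{B}$ for all $j$ (the hypothesis $t \geq 1$ also takes care of any terms with $a+jd < b^{k+1}$). The complement $\mathcal{A} := (\mathbb{Z}/b^2\mathbb{Z}) \setminus \mathcal{B}$ is a single cyclic interval of $b^2 - b$ residues, so setting $\alpha := tb + b$ the map $x \mapsto (x - \alpha) \bmod b^2$ is a bijection from $\mathcal{A}$ onto $\{0,1,\dots,b^2-b-1\}$. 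Writing $y_j := (v_j - \alpha) \bmod b^2$, I would then observe that $(y_j)$ is strictly increasing: one has $y_j \in \{0,\dots,b^2-b-1\}$ for every $j$ (as $v_j \bmod b^2 \in \mathcal{A}$) and $y_{j+1} - y_j \equiv v_{j+1} - v_j \pmod{b^2}$ with $v_{j+1}-v_j \in \{1,\dots,b\}$, so $y_j + (v_{j+1}-v_j) \leq b^2 - 1$ undergoes no reduction modulo $b^2$ and $y_{j+1} = y_j + (v_{j+1}-v_j) > y_j$. Hence $y_0 < \dots < y_{N-1}$ are $N$ distinct elements of a set of size $b^2 - b$, so $N \leq b^2 - b = (b-1)b$; since the progression and $\mathcal{K}(S,b)$ were arbitrary, $\ell(b) \leq (b-1)b$.

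The step I expect to be the crux is the choice of the truncation level $k$. It is exactly the double inequality $b^k \leq d < b^{k+1}$ that forces $1 \leq D \leq b-1$, and hence every step $v_{j+1} - v_j$ to lie in $\{1,\dots,b\}$: a step of size at most $b$ cannot jump the sequence $(v_j \bmod b^2)$ over the length-$b$ forbidden block $\mathcal{B}$, which is precisely what confines $(y_j)$ to an interval of length $b^2 - b$. The remaining ingredients — the carry estimate and the degenerate cases — are routine.
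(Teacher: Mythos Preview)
Your proof is correct and rests on the same pivotal choice as the paper's: pick $k$ with $b^k \leqslant d < b^{k+1}$ and exploit the forbidden digit at position $k{+}1$, where the step is too short to jump a forbidden block. The paper finishes more directly by observing that the progression is then trapped in a single interval of length at most $(b-1)b^{k+1}$ between consecutive forbidden blocks, whence $\lvert A\rvert \leqslant 1 + ((b-1)b^{k+1}-1)/b^k < b^2 - b + 1$; your truncation to $v_j = \lfloor (a+jd)/b^k \rfloor$, carry analysis, and passage to $\mathbb{Z}/b^2\mathbb{Z}$ with a cyclic shift is a valid but more elaborate encoding of exactly the same confinement.
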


\begin{proof}  Suppose that $A \subset \mathcal{K}(S,b)$ is a finite arithmetic progression of $\vert A\vert$ terms with common difference $\Delta$. Choose $k \geqslant 0$ such that $b^k \leqslant \Delta < b^{k+1}$.  If $I$ denotes the shortest interval of integers containing $A$, then $\vert I \vert = (\vert A \vert-1)\Delta+1$, hence $\vert A\vert = 1+ (\vert I\vert -1)/\Delta$.

If $A$ excludes the digit $d$, the upper bound $\Delta < b^{k+1}$ confines $A$ within the interval $[0,db^{k+1} - 1]$ or within an interval of the form
\[[b^{k+2} m + (d+1) b^{k+1}, b^{k+2}(m+1)+ db^{k+1}-1],\]
for some $m\in \mathbb{Z}_{\geqslant 0}$. Thus $\vert I\vert \leqslant  b^{k+2}-b^{k+1}$, which yields
\[\vert A\vert \leqslant 1 + \frac{b^{k+2}-b^{k+1}-1}{\Delta} < 1 + \frac{b^{k+2} - b^{k+1}}{b^{k}} \leqslant b^2-b+1,\]
hence $\vert A\vert  \leqslant b^2-b$ as claimed.
\end{proof}

The bound in the previous proposition is simple and -- as a consequence -- occasionally weak.  In particular, it neglects the potentially compounding effects of digit exclusion at different orders of magnitude, and the arithmetic properties of orbits in the group $\mathbb{Z}/b\mathbb{Z}$. This structure can affect the bounds dramatically, as seen most clearly in the case when the base $b$ is prime.

\begin{proposition}
\label{prime prop}
Let $p$ be prime. Then $\ell(p) \leqslant p-1$. 
\end{proposition}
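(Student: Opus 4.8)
The plan is to exploit the fact that, because $p$ is prime, multiplication by any integer coprime to $p$ permutes the residues modulo $p$. Suppose $A=\{a+j\Delta : 0\leqslant j\leqslant L-1\}\subseteq\mathcal{K}(S,p)$ is an arithmetic progression of length $L$ with common difference $\Delta\geqslant 1$ (a single-term progression being trivial). Write $\Delta=p^k\Delta'$ with $k\geqslant 0$ and $p\nmid\Delta'$. Since $p^k\mid\Delta$, every term of $A$ is congruent to $a$ modulo $p^k$; that is, all terms of $A$ share the same $k$ least significant base-$p$ digits, so the only freedom is in the digit at position $k$ and higher.

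Next I would isolate the digit at position $k$. Writing $c_0=\lfloor a/p^k\rfloor$ and $r=a-p^kc_0$ (so $0\leqslant r<p^k$), one checks that $\lfloor(a+j\Delta)/p^k\rfloor = c_0+j\Delta'$, so the base-$p$ digit of $a+j\Delta$ at position $k$ equals $(c_0+j\Delta')\bmod p$. The map $j\mapsto(c_0+j\Delta')\bmod p$ from $\mathbb{Z}/p\mathbb{Z}$ to itself is a bijection, since $\Delta'$ is invertible modulo the prime $p$. Consequently, among any $p$ consecutive terms of the progression, the digit at position $k$ runs through all of $\{0,1,\ldots,p-1\}$.

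To conclude, note that since $\mathcal{K}(S,p)$ is proper there is some digit $d\in[0,p-1]\setminus S$. If $L\geqslant p$, then by the previous step there is an index $j\in[0,p-1]$ with $(c_0+j\Delta')\bmod p=d$, so $a+j\Delta$ has a forbidden digit at position $k$ and cannot lie in $\mathcal{K}(S,p)$ --- a contradiction. Hence $L\leqslant p-1$, as claimed.

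The only genuinely delicate point is the bijectivity of $j\mapsto(c_0+j\Delta')\bmod p$, which is where primality of $p$ is essential; the carry-free identity $\lfloor(a+j\Delta)/p^k\rfloor=c_0+j\Delta'$ is a routine consequence of $p^k\mid\Delta$ together with $a\equiv r\pmod{p^k}$. It is precisely the breakdown of this clean picture for composite $b$ --- where the orbit of multiplication-by-$\Delta'$ on $\mathbb{Z}/b\mathbb{Z}$ can be a proper subgroup, and where carries between digit positions at different scales must be controlled simultaneously --- that accounts for the substantial extra work in the proof of Theorem \ref{thm:main theorem}.
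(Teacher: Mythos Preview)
Your proof is correct and rests on the same key observation as the paper's: at the digit position corresponding to the $p$-adic valuation of $\Delta$, the map $j\mapsto c_0+j\Delta'\bmod p$ is a bijection because $p$ is prime, so any $p$ consecutive terms exhibit every digit there. The only difference is packaging: the paper argues by infinite descent (showing $p\mid\Delta$ and then stripping the last digit to reduce $\Delta$), whereas you extract the full power $p^k\,\|\,\Delta$ at the outset and read off the contradiction directly --- these are equivalent presentations of the same idea.
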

\begin{proof} Suppose that $\mathcal{K}(S,p)$ contains the progression $A = \{k+ j \Delta : j \in [p]\}$ with $\Delta\neq 0$.  By the pigeonhole principle, there exist distinct $i,j\in [p]$ with $k+j \Delta \equiv k+i \Delta \!\!\mod p$ for some $i\neq j$, hence $p \mid \Delta$ (since $p$ is prime).  By deleting the rightmost digits of the elements of $A$ we obtain a new progression in $\mathcal{K}(S,p)$ with common difference $\Delta/p$; in particular, the progression
$$\left\{\left\lfloor \frac{k}{p} \right\rfloor + j\frac{\Delta}{p}: j\in [p]\right\}.$$
The new common difference is strictly smaller, and we obtain a contradiction by infinite descent.
\end{proof}

With a little more bookkeeping this proof generalizes to prime powers, and implies that $\ell(p^k) \leqslant p^{k-1}(p^k-1)$. So certainly $\ell(b)$ is not asymptotic to $b^2$ as $b$ ranges over all integers; some restriction in Theorem \ref{thm: ell nearly always maximal epsilon free} is required.\\

We now begin the proof of Theorem \ref{thm:main theorem}. Searching for long progressions in $\mathcal{K}([0,8],10)$, one might happen across the example noted earlier, namely the first $71$ multiples of $125$, which -- together with $0$ -- form an arithmetic progression of length $72$, none of whose members contain the digit 9. This example succeeds due to properties of the prime factorisation of $1000/125$, in relation to the base $10$. These properties generalise, and one may use this to construct long digit-excluding arithmetic progressions in arbitrary bases.

\begin{proposition}
\label{prop:AP construction} For all $b \geqslant 2$, the Kempner set $\mathcal{K}([0,b-2],b)$ contains an arithmetic progression of length $(b-1)\beta(b)$. Hence $\ell(b) \geqslant (b-1)\beta(b)$.
\end{proposition}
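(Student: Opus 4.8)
The plan is to exhibit one explicit progression and to verify it digit by digit. Write $\beta=\beta(b)$; since $\rho(\beta)\mid b$ there is a least $k\geqslant 0$ with $\beta\mid b^{k}$, and I set $\delta=b^{k}/\beta$, a positive integer. The claim will be that
\[
A=\bigl\{\,j\delta : 0\leqslant j\leqslant (b-1)\beta-1\,\bigr\}
\]
-- a progression of $(b-1)\beta$ terms with common difference $\delta$ -- lies entirely in $\mathcal{K}([0,b-2],b)$, which with $\beta=\beta(b)$ is exactly the assertion of the proposition. (For $b=10$ this is precisely Kempner's example $\{0,125,\dots,8875\}$, since $\beta(10)=8$, $k=3$, and $\delta=125$; for $b$ prime one has $\beta=1$, $\delta=1$, and $A=\{0,\dots,b-2\}$ is a set of single digits, so nothing is needed. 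Only $\beta<b$ and $\rho(\beta)\mid b$ will be used, so the maximality in the definition of $\beta(b)$ is invoked purely to optimise the length.)

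First I would fix $n=j\delta\in A$ and note $n\leqslant (b-1)b^{k}-\delta<b^{k+1}$, so only the base-$b$ digits of $n$ at positions $0,1,\dots,k$ can be nonzero. The digit at position $k$ is $\lfloor n/b^{k}\rfloor=\lfloor j/\beta\rfloor$, which is at most $b-2$ because $j<(b-1)\beta$. For a position $i$ with $0\leqslant i\leqslant k-1$, the digit at position $i$ depends only on $n\bmod b^{i+1}$; since $n$ is a multiple of $\delta$, this residue is a multiple of $g_{i}:=\gcd(\delta,b^{i+1})$, say $n\equiv sg_{i}\pmod{b^{i+1}}$ with $0\leqslant s<\beta_{i}:=b^{i+1}/g_{i}$, and then a short computation identifies the digit at position $i$ with $\lfloor sg_{i}/b^{i}\rfloor=\lfloor sb/\beta_{i}\rfloor$.

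The heart of the matter -- and the step I expect to carry the argument -- is the inequality $\beta_{i}<b$ for every $i\leqslant k-1$, which is where the definition of $\delta$ does its work. Here $\beta_{i}$ is the additive order of $\delta$ modulo $b^{i+1}$, while $\beta$ is the additive order of $\delta$ modulo $b^{k}$ (from $\delta\beta=b^{k}$ one gets $b^{k}\mid m\delta$ exactly when $\beta\mid m$). Since $b^{i+1}$ divides $b^{k}$, any $m$ with $b^{k}\mid m\delta$ also satisfies $b^{i+1}\mid m\delta$, so $\beta_{i}\mid\beta$ and hence $\beta_{i}\leqslant\beta<b$. Then, with $0\leqslant s<\beta_{i}<b$, one has $0\leqslant sb/\beta_{i}<b$, and $sb/\beta_{i}\geqslant b-1$ would force $s\geqslant (b-1)\beta_{i}/b=\beta_{i}-\beta_{i}/b>\beta_{i}-1$, which is impossible for an integer $s<\beta_{i}$; therefore $\lfloor sb/\beta_{i}\rfloor\leqslant b-2$. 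Thus every digit of every $n\in A$ is at most $b-2$, giving $A\subset\mathcal{K}([0,b-2],b)$ and $\ell(b)\geqslant\vert A\vert=(b-1)\beta(b)$. The only genuine content is the digit-extraction bookkeeping and the observation $\beta_{i}\mid\beta$; everything else is routine.
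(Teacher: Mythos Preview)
Your proof is correct and follows essentially the same approach as the paper: both construct the identical progression $A=\dfrac{b^{K}}{\beta(b)}\,[0,(b-1)\beta(b)-1]$ (your $k$ is the paper's $K$) and then check digit by digit that no element has a base-$b$ digit equal to $b-1$. The key inequality is also the same: the paper shows $\gcd\!\left(b^{i+1},\,b^{K}/\beta(b)\right)>b^{i}$ for $0\leqslant i\leqslant K-1$ and deduces a ``separation condition'' on the fractional parts $\{\,n/b^{i+1}\,\}$, while you phrase the identical fact as $\beta_{i}\mid\beta<b$ via additive orders and read off the digit directly as $\lfloor sb/\beta_{i}\rfloor$; these are two packagings of the same divisibility statement.
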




\begin{proof}
Let $K\geqslant 1$ be the smallest natural number such that $\beta(b)\vert b^K$. We claim that all the members of the arithmetic progression $$ A = \frac{b^K}{\beta(b)}[0,(b-1)\beta(b)-1]$$ exclude the digit $b-1$ from their base-$b$ expansions. To see this, let $k$ satisfy $0\leqslant k\leqslant K-1$. 
Then $\gcd(b^{k+1}\beta(b),b^{K})\geqslant b^{k+1}>b^k\beta(b)$, which implies that $\gcd(b^{k+1},b^K/\beta(b))>b^k$ (by dividing through by $\beta(b)$).
In particular, for all integers $x$ and $y$, either
\begin{equation}
\label{separation condition}
\left\vert x \frac{b^K}{\beta(b)} - y b^{k+1}\right\vert>b^k \qquad \text{or} \qquad x  \frac{b^K}{\beta(b)} = y b^{k+1}.
\end{equation}

This observation implies that none of the $K$ rightmost digits of any integer of the form $xb^K/\beta(b)$ can be equal to $b-1$. Indeed, in base $b$, the $b^{k}$ digit of $x b^K/\beta(b)$ is the unique integer $d$ in the range $0\leqslant d\leqslant b-1$ such that
\[\left\{\frac{xb^K/\beta(b)}{b^{k+1}} \right\}\in \left[\frac{d}{b},\frac{d+1}{b}\right).\]
Yet~\eqref{separation condition} implies that $\{\frac{x b^K}{b^{k+1}\beta(b)}\} \in \{0\} \cup (\frac{1}{b}, \frac{b-1}{b})$ for each $0 \leqslant k \leqslant K-1$.  Since this is disjoint from $[\frac{b-1}{b},1)$, we conclude that none of the $K$ rightmost digits of any integer of the form $x b^K/\beta(b)$ can be equal to $b-1$.

We now fix $x \in [0,(b-1)\beta(b)-1]$ and consider the leftmost digits of $x b^K/\beta(b)$. Certainly $x b^K/\beta(b) < (b-1)b^K$.  From this upper bound we see that the $b^{K}$ digit of $x b^K/\beta(b)$ lies in $[0,b-2]$ and that the digits associated to larger powers of $b$ are all $0$.  Combining this with our previous observations, we conclude that $xb^K/\beta(b)$ omits the digit $(b-1)$ for all $x \in [0,(b-1) \beta(b)-1]$, so $A \subset \mathcal{K}([0,b-2],b)$ as claimed.  Since $\vert A\vert = (b-1)\beta(b)$, we have $\ell(b) \geqslant (b-1)\beta(b)$.
\end{proof}

We now proceed with the second half of our evaluation of $\ell(b)$, the verification that this lower bound is exact. This requires a more technical argument. 

\begin{proposition} \label{prop:upper_bound}
For all $b\geqslant 2$, we have $\ell(b) \leqslant (b-1)\beta(b)$.
\end{proposition}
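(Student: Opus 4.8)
The plan is to argue by strong induction on $b$, or more precisely by infinite descent on the common difference $\Delta$, mirroring the structure of Propositions \ref{prime prop} and \ref{prop:ell_finite}. Suppose $A = \{k + j\Delta : j \in [0,L-1]\} \subset \mathcal{K}(S,b)$ with $L = |A|$ maximal and $\Delta \geqslant 1$; say $A$ omits the digit $d$. As in Proposition \ref{prop:ell_finite} pick $k_0 \geqslant 0$ with $b^{k_0} \leqslant \Delta < b^{k_0+1}$. The crude bound gives $L \leqslant b^2 - b$, but the point is to extract more. First I would reduce to the case $k_0 = 0$: deleting the rightmost digit of each element of $A$ collapses $A$ to a progression with common difference $\Delta/\gcd(\Delta, b)$ (after possibly merging runs), and tracking how much is lost in this deletion against how much is gained is the heart of the matter. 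So the real work is to understand progressions whose common difference $\Delta$ satisfies $1 \leqslant \Delta < b$ directly.

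For such a progression, reduction modulo $b$ sends $A$ to the coset progression $\{k + j\Delta \bmod b\}$ in $\mathbb{Z}/b\mathbb{Z}$, which cycles with period $b/\gcd(\Delta,b)$. The digit $d$ being omitted means the residue $d$ is never hit, so $L \leqslant (b-1) \cdot (\text{number of complete cycles}) + (\text{partial})$, but more importantly, among the elements of $A$ lying in one residue class $r \not\equiv d$, consecutive ones differ by $\Delta \cdot b/\gcd(\Delta,b) \geqslant b$ once we pass to the next order of magnitude, so we can iterate. The key quantitative input should be: if $g = \gcd(\Delta, b)$, then within any window the number of terms of $A$ is controlled by $(b - |\{d\}|)$ times the count coming from $\lfloor \cdot / b \rfloor A$, and that collapsed progression has common difference $\Delta/g$ — which need not be smaller than $\Delta$ unless $g > 1$. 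The case $g = 1$ is exactly where $\Delta$ and $b$ share no factor; then $A \bmod b$ visits all of $\mathbb{Z}/b\mathbb{Z}$, so it must hit $d$ within $b$ consecutive terms unless $L < b \leqslant (b-1)\beta(b)$, done immediately. Hence one may assume $g > 1$, and now $\Delta/g < \Delta$, setting up the descent.

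The crucial structural lemma I would aim to prove is a \emph{recursive inequality}: writing $A' = \lfloor A/b \rfloor$ for the progression obtained by deleting last digits, $A'$ lives in $\mathcal{K}(S,b)$ with common difference $\Delta' = \Delta/g$, and $|A| \leqslant \frac{b-1}{?} \cdot |A'| + O(1)$ where the factor should ultimately telescope to give $(b-1)\beta(b)$. Concretely: the elements of $A$ in a fixed block $[mb, (m+1)b)$ form a sub-progression of $\leqslant b/\Delta_{\text{eff}}$ terms all of whose last digits are $\not\equiv d$, and the number of nonempty blocks is $|A'|$ (up to boundary terms). Optimizing the product $\prod (\text{stuff})$ over the chain $b \to b/g_1 \to b/(g_1 g_2) \to \cdots$ and recognizing that the maximal value of this telescoping product is achieved exactly when at each stage we peel off the \emph{largest} admissible factor — which is precisely the definition of $\beta$ via $\rho(\beta(b)) \mid b$ — should yield the bound $(b-1)\beta(b)$. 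I expect the main obstacle to be bookkeeping the boundary/partial-block terms so that they do not accumulate: a sloppy $O(1)$ per level of descent would ruin a tight bound, so one needs the inductive hypothesis phrased as an exact inequality $\ell_\Delta(b) \leqslant (b-1)\beta(b)$ valid for \emph{all} $\Delta$ simultaneously, with the partial blocks at the top and bottom of $A$ absorbed by the slack between $(b-1)\beta(b)$ and the naive count. A secondary subtlety is that $\beta$ does not behave multiplicatively in the obvious way under $b \mapsto b/g$ — for instance $\beta(b/g)$ can jump — so the telescoping identity linking $\beta(b)$ to the product of the peeled factors $g_1 g_2 \cdots$ must be established separately as an arithmetic fact about $\rho$ and $\beta$, likely the cleanest self-contained sublemma of the argument.
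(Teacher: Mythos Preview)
Your descent step contains a concrete error. For $1 \leqslant \Delta < b$ with $g = \gcd(\Delta,b) > 1$ you assert that $A' = \lfloor A/b \rfloor$ collapses to an arithmetic progression with common difference $\Delta/g$. But since consecutive terms of $A$ differ by $\Delta < b$, the floors $\lfloor (k+j\Delta)/b\rfloor$ step by $0$ or $1$, so after deduplication $A'$ is simply an \emph{interval} of consecutive integers --- a progression of common difference $1$, independent of $g$. (Take $\Delta = 4$, $b=10$: the floors read $0,0,0,1,1,2,2,2,3,3,4,\ldots$, not anything with step $2$.) Thus your descent collapses to $\Delta'=1$ in a single step and throws away precisely the arithmetic of $\gcd(\Delta,b)$ that was supposed to build up to $\beta(b)$. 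Relatedly, the chain ``$b \to b/g_1 \to b/(g_1 g_2) \to \cdots$'' is confusing: the base $b$ is fixed throughout the problem, and no quantity in your setup naturally descends along such a chain. The reduction to $\Delta<b$ that precedes all of this is also not justified --- digit deletion only cleanly divides $\Delta$ by $b$ when $b\mid\Delta$.

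The paper's argument avoids these issues by inducting upward on the digit position $k$ rather than downward on $\Delta$. Writing $\Delta_k$ for the bottom $k{+}1$ base-$b$ digits of $\Delta$, one tracks the additive order $\lambda_k$ of $\Delta_k/b^{k+1}$ in $\mathbb{R}/\mathbb{Z}$. Either $\lambda_k < b$ for every $k$ (impossible, since then $\lambda_k\Delta_k = \mu_k b^{k+1}$ with bounded left side and unbounded right side), or there is a first $k$ with $\lambda_k \geqslant b$. At that $k$ the orbit $\{j\Delta_k/b^{k+1}\bmod 1\}$ is $1/\lambda_k$--spaced, so among any $\lambda_k$ consecutive terms of $A$ at least $\lfloor\lambda_k/b\rfloor$ have $b^k$-digit equal to the forbidden $d$, giving $\ell(b)\leqslant \lambda_k - \lfloor\lambda_k/b\rfloor$. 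The entry point for $\beta(b)$ is clean and is exactly the arithmetic fact you were groping for: the inductive hypothesis forces $\lambda_{k-1}\mid b^k$ with $\lambda_{k-1}<b$, hence $\lambda_{k-1}\leqslant\beta(b)$ by definition; and $\lambda_k\mid b\lambda_{k-1}$, so $\lambda_k\leqslant b\beta(b)$ and therefore $\lambda_k-\lfloor\lambda_k/b\rfloor\leqslant (b-1)\beta(b)$. No telescoping product, no accumulating boundary terms, no change of base.
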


\begin{proof}
Without loss of generality, let $S\subset [0,b-1]$ be any set of $b-1$ digits (containing $0$), and let $A = \{x+ j \Delta : j \in [0,\ell(b)-1]\}$ be an arithmetic progression in $\mathcal{K}(S,b)$ of maximal length, in which $\Delta > 0$ is taken minimally over all arithmetic progressions of length $\ell(b)$.

Let $\Delta = d_K b^K + \ldots + d_1 b + d_0$ denote the base $b$ expansion of $\Delta$, where $K$ is chosen such that $d_K \neq 0$.  For notational convenience, let $\Delta_k:= d_k b^k + \ldots +d_1b + d_0$ for each $k \geqslant 0$. (Note that $\Delta_k = \Delta$ for $k \geqslant K$.) We may assume without loss of generality that $d_0 \neq 0$, else by removing the rightmost digit from all elements of $A$ one constructs an arithmetic progression contained in $\mathcal{K}(S,b)$ of common difference $\Delta/b$, contradicting our minimality assumption on $\Delta$. (This is the same device as we used in the proof of Proposition \ref{prime prop}).

Our proof of Proposition~\ref{prop:upper_bound} rests on the following claim, whose peculiar statement arises naturally from an inductive argument. 
\begin{claim}
Consider the following statements:
\begin{itemize}
\item[\emph{C}$1$:] $\ell(b)\leqslant (b-1)\beta(b)$;
\item[\emph{C}$2$\emph{(}k\emph{)}:] there exist coprime integers $\lambda_k,\mu_k \in [1,b-1]$ satisfying\\ $\lambda_k \Delta_k = \mu_k b^{k+1}$. 
\end{itemize}
Then either \emph{C1} holds or \emph{C2(}k\emph{)} holds for all $k \geqslant 0$.
\end{claim}

This claim immediately settles the theorem, since the statement C2($k$) cannot possibly hold for all $k\geqslant 0$. Indeed, we have $\lambda_k \Delta_k < b \Delta$, while $\mu_k b^{k+1}$ grows in $k$ without bound.
\end{proof}

\begin{proof}[Proof of Claim]
We prove this claim by induction, showing that for every $k\geqslant 0$, either C1 holds or C2($k^\prime$) holds for all $k^\prime\leqslant k$. For the base case $k=0$, note that $\Delta_k = d_0$.  If $(d_0,b)=1$, then $d_0$ generates the additive group $\mathbb{Z}/b \mathbb{Z}$ and the elements $\{x + j \Delta : j \in [0,b-1]\}$ have $b$ distinct units digits.  Thus $\ell(b) \leqslant (b-1) \leqslant (b-1)\beta(b)$, so C1 holds. 

Otherwise, $(d_0,b)>1$, which implies that there exists $\lambda \in [1,b-1]$ for which $\lambda d_0 \equiv 0 \!\! \mod b$.  Thus $\lambda d_0 = \mu b$ for some $\mu$, and we may assume that $(\lambda,\mu)=1$ by dividing through by common factors.  This concludes the base case.

Proceeding to the inductive step, let $k\geqslant 1$ and assume that the inductive hypothesis C2($k^\prime$) holds for all smaller $k^\prime$. In particular, $\Delta_{k-1} = (\mu_{k-1}/\lambda_{k-1}) b^k$ for some coprime integers $\lambda_{k-1},\mu_{k-1}\in [1,b-1]$, and hence $\Delta_k = d_k b^k + (\mu_{k-1}/\lambda_{k-1}) b^k$.

Let $\lambda_k$ denote the order of $\Delta_k/b^{k+1}$ in the additive group $\mathbb{R}/\mathbb{Z}$, and let $\mu_k$ denote the integer $\lambda_k (\Delta_k/b^{k+1})$. We see that $(\lambda_k,\mu_k)=1$, as one could divide through by any common factors of $\lambda_k$ and $\mu_k$ to contradict the fact that $\lambda_k$ is the order of $\Delta_k/b^{k+1}$ in $\mathbb{R}/\mathbb{Z}$. Now, if $\lambda_k < b$, then $\mu_k < b$ as well, since $\Delta_k/b_{k+1}<1$ for any $k$.  In this case, $\lambda_k$ and $\mu_k$ satisfy the conditions listed in C2($k$). Therefore C2($k^\prime$) holds for all $k^\prime \leqslant k$.  

It remains to address the case $\lambda_k \geqslant b$. By usual facts about finite subgroups of $\mathbb{R}/\mathbb{Z}$, we note that the orbit of $\Delta_k/b^{k+1}$ in $\mathbb{R}/\mathbb{Z}$ is exactly the set of fractions with denominator dividing $\lambda_k$.  In particular, the set of values
\[T=\left\{\frac{x}{b^{k+1}} + \frac{\Delta_k j}{b^{k+1}} \!\! \mod 1 : j \in [0,\lambda_k -1]\right\}\]
are equally spaced, with gaps of size $1/\lambda_k$.  Since $\lambda_k \geqslant b$, for any integer $d \in [0,b-1]$ at least one member of $T$ lies in the half-open interval $[\frac{d}{b},\frac{d+1}{b})$. In other words, at least one member of the progression $x + \Delta_k[0,\lambda_k-1]$ has $b^{k}$ digit equal to $d$.

This information immediately implies that $x+\Delta[0,\lambda_k-1]$ is not contained in any proper Kempner set $\mathcal{K}(S,b)$, and hence $\ell(b) \leqslant \lambda_k -1$.  However, more can be said with a slight refinement to our analysis.  Equal spacing implies that at least $\lfloor \lambda_k/b \rfloor$ members of $T$ lie in the interval $[\frac{d}{b},\frac{d+1}{b})$. We are left with the stronger bound $\ell(b) \leqslant \lambda_k - \lfloor \lambda_k/b\rfloor$.

We now establish an upper bound on the function $\lambda_k - \lfloor \lambda_k/b\rfloor$, given the known constraints on $\lambda_k$.  For starters, the inductive hypothesis implies that $\lambda_{k-1} \mid \mu_{k-1} b^{k}$, hence $\lambda_{k-1} \mid b^{k}$ (since $\lambda_{k-1}$ and $\mu_{k-1}$ are coprime).  Since $\lambda_{k-1} < b$ and $\lambda_{k-1}$ divides a power of $b$, this implies that $\lambda_{k-1} \leqslant \beta(b)$.  Secondly, the inductive hypothesis allows us to write
\[\frac{\Delta_k}{b^{k+1}} = \frac{d_k \lambda_{k-1} + \mu_{k-1}}{b \lambda_{k-1}},\]
which implies that $b\lambda_{k-1} (\Delta_k/b^{k+1}) \equiv 0 \!\! \mod 1$.  This implies that $b \lambda_{k-1}$ is a multiple of the order of $(\Delta_k/b^{k+1}) \!\!\mod 1$, ie. $\lambda_k \mid b \lambda_{k-1}$.  We conclude that $\lambda_k \leqslant b \lambda_{k-1} \leqslant b\beta(b)$.

The function $\lambda \mapsto \lambda - \lfloor \lambda/b \rfloor$ is non-decreasing as $\lambda$ increases over integers, hence
\[\ell(b) \leqslant \lambda_k - \left\lfloor \frac{\lambda_k}{b}\right\rfloor \leqslant b \beta(b) - \left\lfloor \frac{b \beta(b)}{b}\right\rfloor =b \beta(b) - \beta(b)= (b-1)\beta(b),\]
which implies that C1 holds. This completes the inductive step, and so completes the proof of Theorem \ref{thm:main theorem}.
\end{proof}

\section{Asymptotic Analysis}

In this section we analyse the function $\beta(b)$, with the ultimate goal of proving Theorem \ref{thm: ell nearly always maximal epsilon free}. We begin with the following simple observation.

\begin{proposition} \label{prop:betabounds1}
We have
\[\liminf_{n \to \infty} \frac{\beta(n)}{n} =0 \quad \text{and} \quad \limsup_{n \to \infty} \frac{\beta(n)}{n} =1.\]
\end{proposition}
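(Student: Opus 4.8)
The plan is to establish the two statements separately, each by exhibiting an explicit sequence of integers along which $\beta(n)/n$ has the desired behaviour. For the $\limsup$, the natural choice is to take $n$ of the form $n = p^{k}$ for a fixed prime $p$ and $k \to \infty$ — but actually any sequence works: simply note that $\beta(2m) \geqslant m$ whenever $m$ divides a power of $2m$... this is awkward, so instead I would use prime powers. Recall from the discussion after the definition of $\beta$ that $\beta(p^{k}) = p^{k-1}$. Hence along the sequence $n = 2^{k}$ we have $\beta(n)/n = 1/2$, which only gives $\limsup \geqslant 1/2$, not $1$. To push the ratio to $1$, I would instead consider $n$ of the form $n = 2^{a} \cdot 3^{b}$ with $a,b$ both large, and observe that $2^{a-1} 3^{b}$ divides $n^{2}$ and is less than $n$, so $\beta(n) \geqslant 2^{a-1} 3^{b} = n/2$; that still caps at $1/2$. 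The correct idea: take $n$ so that $n - n/p$ is itself of the form (radical dividing $n$) for a large prime $p \mid n$. Concretely, fix a squarefree $q$ and let $n = q^{k}$; then the largest proper divisor of $n$ that is a multiple only of primes dividing $n$ is $n/p_{\min}$ where $p_{\min}$ is the smallest prime factor — wait, $\beta$ need not be a divisor of $n$, only its radical must divide $n$. So for $n = q^{k}$ with $q$ squarefree, $\beta(n)$ is the largest integer below $n$ all of whose prime factors divide $q$; writing $q = p_{1} \cdots p_{r}$, this is roughly $n(1 - 1/p_{r})$ when $p_{r}$ is the largest prime, so choosing $n = q^{k}$ with $q$ having a large prime factor gives $\beta(n)/n \to 1$. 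That is the cleanest route, and I would spell it out: given $\varepsilon > 0$, pick a prime $p > 1/\varepsilon$, set $q = 2p$, and for large $k$ take $n = q^{k}$; then $n/2 \cdot p^{?}$... I would verify $\beta(q^{k}) \geqslant q^{k} - q^{k}/p$ by exhibiting the explicit integer $2^{k} p^{k-1} \cdot (\text{something})$ just below $q^{k}$ with radical $2p$.

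For the $\liminf$, the idea is dual: I want a sequence $n_j \to \infty$ along which the largest integer below $n_j$ with radical dividing $n_j$ is very small compared to $n_j$. The extreme case is $n$ prime: then $\rho(m) \mid n$ forces $m$ to be a power of $n$ (for $m \geqslant 2$) or $m = 1$, so the only candidates below $n$ are $1$ (and $0$, but $\beta$ is the largest integer less than $n$ with that property, which is $1$ for $n$ prime, so $\beta(n) = 1$). Hence along the primes $n = p \to \infty$ we have $\beta(n)/n = 1/p \to 0$, which gives $\liminf = 0$ immediately. I should double-check the edge convention: for $n$ prime, integers $m$ with $1 \leqslant m < n$ and $\rho(m) \mid n$ — since $\rho(m)$ is a product of primes dividing $n = p$, we need $\rho(m) \in \{1, p\}$, and $\rho(m) = p$ is impossible for $m < p$, so $\rho(m) = 1$, forcing $m = 1$. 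Thus $\beta(p) = 1$ and $\beta(p)/p \to 0$. Combined with the trivial bound $\beta(n) \geqslant 1$ this pins the $\liminf$ at exactly $0$.

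Finally I would assemble the two halves: the $\limsup$ is at most $1$ trivially since $\beta(n) < n$ by definition (it is the largest integer \emph{less than} $n$ with the radical condition), and the construction above shows it is at least $1 - \varepsilon$ for every $\varepsilon > 0$, hence equals $1$; the $\liminf$ is at least $0$ trivially and the prime sequence shows it is at most $0$, hence equals $0$. I do not anticipate any genuine obstacle here — the only point needing a moment's care is the verification, in the $\limsup$ construction, that one can actually find an integer in the short interval $(q^{k}(1-1/p),\, q^{k})$ whose radical divides $q$; the candidate $2^{a} p^{b}$ with $a,b$ chosen by a greedy/continued-fraction argument to approximate $q^{k}$ from below works, and I would present that approximation step as the technical heart of the proof.
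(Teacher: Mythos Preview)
Your $\liminf$ argument is exactly the paper's: $\beta(p)=1$ for primes $p$, so $\beta(p)/p\to 0$.

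For the $\limsup$, your eventual route---take $n=(2p)^k$ for a large fixed prime $p$ and use an irrationality/continued-fraction argument to locate some $2^a p^b$ just below $n$---does work, but it is far heavier than needed and you have not actually carried it out; in particular your heuristic that $\beta(q^k)\approx q^k(1-1/p_r)$ for the largest prime $p_r\mid q$ is not correct as stated (the nearest smooth number below $q^k$ need not be a divisor of $q^k$, and its distance to $q^k$ is governed by Diophantine approximation of $\log_2 p$, not by $1/p_r$). The paper bypasses all of this with a one-line construction: take $n=2^k+2$. Then $2\mid n$, so $2^k$ has radical dividing $n$ and hence $\beta(n)\geqslant 2^k$; on the other hand $\gcd(2^k+1,2^k+2)=1$, so $2^k+1$ is not a candidate, giving $\beta(2^k+2)=2^k$ exactly and $\beta(n)/n=2^k/(2^k+2)\to 1$. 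This avoids any appeal to equidistribution or continued fractions, and produces the full statement in two sentences rather than as the ``technical heart'' of the proof.
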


\begin{proof}
The first claim follows from the observation that $\beta(p) = 1$ for all primes $p$.  For the second, we note that $\beta(2^k+2)=2^k$ for all $k>1$.
\end{proof}

It is clear from this proposition that the behaviour of $\beta(n)$ is erratic as $n$ varies. However, its calculation may be understood as a certain integer programming problem, as illustrated by the following example. 

\begin{example} \label{example:beta24}
In this example, we calculate $\beta(24)$ using techniques from mixed integer programming.  We may write $\beta(24)=2^a\cdot 3^b$, with $a,b \in \mathbb{N}$.  It follows that $a \log 2 + b \log 3 < \log 24$, and $(a,b)$ may be visualized as a lattice point in the following figure (Figure \ref{fig1}). The equation of the line is $f(x) = \log_3 24-x\log_3 2 $. 

\begin{figure}[h]
\centering
\includegraphics[width=0.8\textwidth]{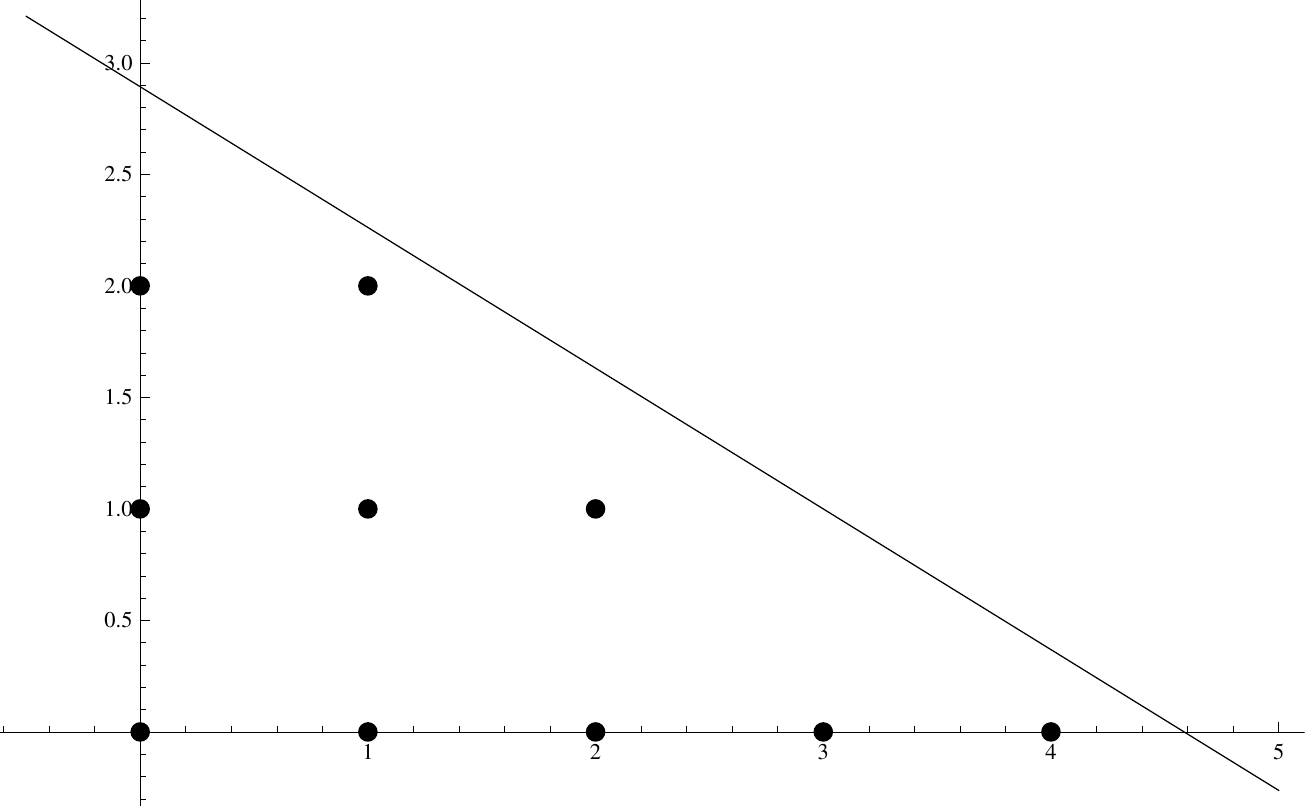}
\caption{Lattice points $(a,b)$ corresponding to $\beta(24)=2^a\cdot3^b$.}
\label{fig1}
\end{figure}
Let us restrict our attention to the set $S$ of lattice points of the form $(a,b)$, in which $b$ is taken maximally for fixed $a$.  If $(a,b) \in S$, the vertical distance from $(a,b)$ to the diagonal in Figure \ref{fig1} is then given by $\{\log_3 24 - a\log_3 2\}$. We also note that $a\log 2 + b \log 3$ is maximized among the lattice points below the line when $(a,b) \in S$ and $\{\log_3 24 - a \log_3 2\}$ is minimized (as a function of $a$). In our example, minimization occurs at $(a,b)=(1,2)$, and so we obtain $\beta(24)=2^1 \cdot 3^2=18$.
\end{example}

The technique of Example \ref{example:beta24} generalizes easily: if $n$ has $k$ prime divisors $p_1,\ldots,p_k$, we may associate to $n$ a set of lattice points in $\mathbb{Z}^k$, namely \[ \{ (a_1,\cdots,a_k) \in \mathbb{Z}_{\geqslant 0}^k : a_1\log p_1 + \cdots + a_k \log p_k < \log n\}.\] The lattice point $(a_1,\ldots, a_k)$ that minimizes distance to the the hyperplane
\[x_1 \log p_1 + \ldots x_k \log p_k = \log n\]
determines $\beta(n)$ by the formula $\beta(n)= \prod_{i=1}^k p_i^{a_i}$.  \\

Combining this idea with well-known equidistribution results gives the following.

\begin{lemma} \label{thm:beta_asymp_along_AP}
We have $\beta(n) \sim n$ as $n \to \infty$ within $N\mathbb{Z}$ if and only if $N$ is not a prime power.
\end{lemma}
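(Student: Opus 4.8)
The plan is to prove the two implications separately, exploiting the lattice-point description of $\beta$ from Example \ref{example:beta24}: if $p_1,\dots,p_r$ are the distinct primes dividing $n$, then $\beta(n)=\prod_i p_i^{a_i}$ where $(a_1,\dots,a_r)\in\mathbb{Z}_{\geq 0}^r$ is chosen to maximise $\sum_i a_i\log p_i$ subject to $\sum_i a_i\log p_i<\log n$. Thus $\beta(n)$ is close to $n$ precisely when the hyperplane $\sum_i x_i\log p_i=\log n$ is well approximated from below by nonnegative integer points, and the point of the lemma is that this approximation can be made uniformly good once two of the $\log p_i$ are available, whereas it fails when only one is.

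\emph{Only if.} Suppose $N=p^k$ is a prime power. Then $p^m\in N\mathbb{Z}$ for every $m\geq k$, and $\beta(p^m)=p^{m-1}$ (the prime-power refinement of Proposition \ref{prime prop} recorded just after it). Hence $\beta(n)/n=1/p$ along this subsequence of $N\mathbb{Z}$, so $\beta(n)/n\not\to 1$. (If one does not regard $N=1$ as a prime power, the same remark with $\beta(q)=1$ for primes $q$ covers that case.)

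\emph{If.} Suppose $N$ is not a prime power, so it has two distinct prime divisors $p_1,p_2$. Put $\alpha=\log p_2/\log p_1$, which is irrational since $p_2^q=p_1^m$ has no solution in positive integers. Fix $\epsilon\in(0,\tfrac12)$ and set $\eta=-\log(1-\epsilon)/\log p_1\in(0,1)$. By equidistribution of $(t\alpha\bmod 1)_{t\geq 0}$ there is $B=B(\eta)$, depending only on $\epsilon$, such that $\{\,t\alpha\bmod 1:0\leq t\leq B\,\}$ is an $(\eta/4)$-net of $[0,1)$. Now let $n\in N\mathbb{Z}$ satisfy $\log n>(B\alpha+1)\log p_1$, and write $L=\log n/\log p_1$. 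Using the net (applied to the point $L-\tfrac34\eta\bmod 1$), choose $t\in[0,B]$ with $L-t\alpha\equiv\theta\pmod 1$ for some $\theta\in(\eta/2,\eta)$, and set $s=\lfloor L-t\alpha\rfloor$. Since $L-t\alpha>L-B\alpha>1$ we have $s\geq 1$, and $s+t\alpha=L-\theta$; hence $D:=p_1^s p_2^t$ satisfies $\rho(D)\mid p_1p_2\mid N\mid n$ and $\log D=(L-\theta)\log p_1=\log n-\theta\log p_1$, so that $(1-\epsilon)n<D<n$. Therefore $\beta(n)\geq D>(1-\epsilon)n$. As $\beta(n)<n$ always and $\epsilon$ was arbitrary, $\beta(n)\sim n$ within $N\mathbb{Z}$.

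The only delicate points are bookkeeping. One must guarantee that the auxiliary exponent $s$ is nonnegative; this is why $n$ is taken large compared with the net size $B$, which is legitimate because $B$ depends only on $\epsilon$. One must also secure the strict inequality $D<n$ — since $\beta(n)$ is the largest divisor of a power of $n$ lying \emph{strictly} below $n$ — and this is why $\theta$ is forced into a subinterval of $(0,\eta)$ bounded away from $0$, which costs nothing as $B$ may be enlarged at will. I do not anticipate any serious obstacle beyond this; the one genuinely quantitative input is the uniform $\epsilon$-density of $\{t\alpha\bmod 1:t\leq B\}$, which is exactly Weyl equidistribution (or, if one prefers, a pigeonhole/three-distance argument), and the prime-power case shows this one-dimensional input is already the crux.
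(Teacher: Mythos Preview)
Your proof is correct and follows essentially the same approach as the paper: both directions match, with the ``if'' part reducing to the density of $\{t\alpha \bmod 1\}$ for the irrational $\alpha=\log p_2/\log p_1$ and then reading off a lattice point $(s,t)$ just below the hyperplane $x\log p_1+y\log p_2=\log n$. The only cosmetic difference is that you bound the exponent $t$ by a fixed net size $B=B(\epsilon)$ and let $s$ absorb the size of $n$, whereas the paper lets the analogous exponent range up to $\lfloor \log_p m\rfloor$; your extra care in forcing $\theta>\eta/2$ to guarantee $D<n$ strictly is a nice touch but not a substantive departure.
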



\begin{proof}
If $N=p^k$ is a prime power, then $N\mathbb{Z}$ contains the subsequence $\{p^{kn}\}_{n \geqslant 1}$.  Since $\beta(p^{kn})=p^{kn-1}$, we cannot have $\beta(n) \sim n$ within $N\mathbb{Z}$.

Otherwise, let $p$ and $q$ be distinct primes dividing $N$, and fix a positive constant $\varepsilon$. As $\log_q p$ is irrational, the sequence $(\{u_n\})_{n=0}^\infty$ given by $u_n := n\cdot\log_q p$ is equidistributed mod $1$ (by the Equidistribution Theorem: see Proposition 21.1 of \cite{IwKo04}, say). In particular, there exists a positive parameter $L_\varepsilon$ such that $l>L_\varepsilon$ implies that the sequence $(\{ u_n\})_{n=0}^l$ contains at least one element in each interval mod $1$ of length $\varepsilon$.

Now let $m$ be a natural number and let $l = \lfloor \log_p m \rfloor$. From the above remarks, there exists a positive parameter $M_\varepsilon$ such that, for each $m>M_\varepsilon$, the shifted sequence $(\{\log_q m - u_n \})_{n=0}^l$ contains some element in the interval $(0,\varepsilon)$. In other words there exists $n_0$ at most $l$ (but dependent on $l$) such that
\[0 < \{\log_q m - n_0 \cdot \log_q p\}<\varepsilon.\] Also note that $\log_q m - n_0 \cdot \log_q p $ is positive. 

Now, assume $pq \mid m$ and consider $(a,b):=(n_0,\lfloor \log_q m - n_0 \cdot \log_q p \rfloor )$. We have $\beta(m) \geqslant p^a \cdot q^b$ by construction. So
\[\beta(m)\geqslant p^a \cdot q^b = q^{\log_q m - \{\log_q m - n_0 \cdot \log_q p \}} > q^{\log_q m - \varepsilon}=m \cdot q^{-\varepsilon}.\]
Thus $q^{-\varepsilon} < \beta(m)/m < 1$, for all $m$ satisfying $m>M_\varepsilon$ and $pq \mid m$.  Since $\varepsilon$ was arbitrary, and $q$ fixed, it follows that $\beta(m) \sim m$ within $pq\mathbb{Z}$, and hence within $N\mathbb{Z}$. 
\end{proof}

By considering $N=6$, for example, we obtain a set of density $1/6$ (namely, $6\mathbb{Z}$) on which $\ell(b) \sim b^2$ as $b$ tends to infinity within that set. Any finite union of such sets $N_i\mathbb{Z}$, where $N_i$ has two distinct prime factors $p_i$ and $q_i$, will also have this property, and one may show with relative ease that such a union may be arranged to have natural density arbitrarily close to $1$. 

However, by quantifying estimates made in the previous lemma, we can do slightly better, and show the existence of a set with the desired property that has density $1$, thereby proving Theorem \ref{thm: ell nearly always maximal epsilon free}.

\begin{proof}[Proof of Theorem \ref{thm: ell nearly always maximal epsilon free}]
Let $f(N)$ be a function that satisfies $f(N)  \to \infty$ as $N \to \infty$ (to be further specified later). For integers $j \geqslant 0$, let $D_j$ denote the set of $n \in (2^{j-1},2^j]$ such that $n$ has at least two distinct prime factors $p,q \leqslant f(2^{j-1})$.
Let \[D:= \bigcup_{j\geqslant 0} D_j.\] 
The set $D$ is our candidate set for use in Theorem \ref{thm: ell nearly always maximal epsilon free}.

\begin{lemma} If $f$ grows slowly enough, the set $D$ has natural density $1$.
\end{lemma}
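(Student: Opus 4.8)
The plan is to prove that the complement $C := \mathbb{Z}_{\geqslant 1}\setminus D$ has natural density zero. I would exploit the dyadic structure of $D$: since $D\cap(2^{j-1},2^j]\supseteq D_j$, it suffices to bound, for each $j$, the size of $C_j := (2^{j-1},2^j]\setminus D_j$, which is precisely the set of $n\in(2^{j-1},2^j]$ having \emph{at most one} distinct prime factor that is $\leqslant y_j$, where I abbreviate $y_j := f(2^{j-1})$. The goal is to show $|C_j|\leqslant\varepsilon_j\,2^j$ with $\varepsilon_j\to 0$; summing the dyadic blocks and discarding a bounded initial segment then gives density $1$ for $D$. The hypothesis on $f$ will be that $f(N)\to\infty$ slowly enough that $\pi(f(N)) = o(\log_2 N)$, which is satisfied, for instance, by $f(N)=\log\log N$ (much faster choices work too).

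First I would record the standard sieve estimate for rough numbers: by Legendre's inclusion–exclusion over the primes $p\leqslant y$ together with Mertens' third theorem, for every $t\geqslant 1$,
\[ \#\{\, m\leqslant t : p\mid m \implies p>y \,\} \;\leqslant\; t\prod_{p\leqslant y}\Bigl(1-\tfrac1p\Bigr) + 2^{\pi(y)} \;\ll\; \frac{t}{\log y} + 2^{\pi(y)}. \]
Next, every $n$ with at most one distinct prime factor $\leqslant y$ can be written as $n=p^a m$ with $p\leqslant y$ prime, $a\geqslant 0$, and $m$ having no prime factor $\leqslant y$ (the case $a=0$ being the rough numbers themselves). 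Applying the displayed bound to each inner count and summing over $p\leqslant y$ and $a\geqslant 1$, using $\sum_{p\leqslant y}\sum_{a\geqslant 1}p^{-a}=\sum_{p\leqslant y}(p-1)^{-1}\ll\log\log y$ and noting there are $\ll\pi(y)\log t$ relevant pairs $(p,a)$, I obtain
\[ \#\{\, n\leqslant t : n\text{ has }\leqslant 1\text{ distinct prime factor }\leqslant y \,\} \;\ll\; \frac{t\,\log\log y}{\log y} + \pi(y)\,(\log t)\,2^{\pi(y)}. \]
Specialising $t=2^j$ and $y=y_j=f(2^{j-1})$: the main term is $\ll 2^j\cdot\tfrac{\log\log y_j}{\log y_j}=o(2^j)$ since $y_j\to\infty$, and the error term is $o(2^j)$ precisely when $\pi(f(2^{j-1}))=o(j)$ --- this is the single place where the ``grows slowly enough'' hypothesis is invoked. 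Hence $|C_j|=\varepsilon_j\,2^j$ with $\varepsilon_j\to 0$.

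Finally I would pass from the blocks to density: given $\delta>0$, fix $j_0$ with $\varepsilon_j\leqslant\delta$ for all $j>j_0$. For $N$ with $2^{J-1}<N\leqslant 2^J$ and $N>2^{j_0}$, one has $C\cap[1,N]\subseteq[1,2^{j_0}]\cup\bigcup_{j_0<j\leqslant J}C_j$, so
\[ \#\bigl(C\cap[1,N]\bigr) \;\leqslant\; 2^{j_0} + \delta\sum_{j\leqslant J}2^j \;<\; 2^{j_0} + 4\delta N. \]
Dividing by $N$ and letting $N\to\infty$ gives $\limsup_N\tfrac1N\#(C\cap[1,N])\leqslant 4\delta$, and since $\delta$ was arbitrary the complement has density zero, i.e. $D$ has density $1$.

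The main obstacle --- really the only subtlety --- is ensuring the elementary sieve error $2^{\pi(y)}$, inflated by the sums over $p$ and $a$, stays negligible against $2^j$; this is exactly what constrains the growth rate of $f$. Since we are free to choose $f$, no delicate estimate is needed: any sufficiently slowly growing $f$ works, and everything else is routine bookkeeping with Mertens-type bounds.
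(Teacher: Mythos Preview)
Your proof is correct and follows the same high-level strategy as the paper: bound the complement of $D_j$ within each dyadic block by a sieve estimate, obtain the main term $O(N\log\log f(N)/\log f(N))$ via Mertens, and then sum blocks to deduce natural density $1$. The difference is in the sieve used. The paper invokes the Selberg sieve, which yields $\#\{n\in(N,2N]:\ p\mid n\Rightarrow p>f(N)\}\ll N/\log f(N)$ with no secondary error term (valid so long as, say, $f(N)\leqslant N^{1/2}$), and then a union bound over the single small prime to get the exactly-one-small-prime count. You instead use the elementary Legendre--Eratosthenes inclusion--exclusion, which incurs the additional $2^{\pi(y)}$ error and forces the explicit growth constraint $\pi(f(N))=o(\log_2 N)$. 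Since the lemma only asks for \emph{some} sufficiently slowly growing $f$, your more elementary route costs nothing in the end; the paper's version merely permits a wider range of admissible $f$. One minor quibble: the word ``precisely'' in your claim that the error term is $o(2^j)$ \emph{precisely} when $\pi(f(2^{j-1}))=o(j)$ is too strong --- the condition is sufficient but not necessary (any $\pi(f(2^{j-1}))\leqslant cj$ with $c<1$ already suffices) --- though this does not affect the validity of the argument.
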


\begin{proof} We begin by fixing $j \geqslant 0$ and bounding the size of $D_j$ from below.  For convenience, we write $N$ for $2^{j-1}$.

To produce this lower bound, we find an upper bound for $(N,2N]\setminus D_j$. Indeed, by a standard application of a small sieve (e.g. the Selberg sieve, in particular Theorem 9.3.10 of \cite{Mu08}), one may show that the number of $n \in (N,2N]$ without any prime factor $p $ less than $f(N)$ is
\[O\bigg( N \prod_{p < f(N)} \left(1-\frac{1}{p} \right)\bigg),\]
provided $f(N)$ grows slowly enough.  By Mertens' Third Theorem, this quantity is $O(N/\log f(N))$.

By using a union bound and the sieve above, we bound the number of $n \in (N,2N]$ with exactly one prime factor $p < f(N)$ by
\[O\Bigg( \sum_{p < f(N)} \frac{N}{p}  \prod_{\substack{q<f(N) \\q \neq p}} \left(1-\frac{1}{q}\right)\!\Bigg).\]
This quantity is $O(N \log\log f(N)/\log f(N))$ (again by Mertens' theorems), and we conclude by exclusion that
\[\lvert D_j \rvert = N\left(1 - O \left(\frac{\log \log f(N)}{\log f(N)}\right)\right).\]

This already establishes that $D$ has full upper Banach density.  To show that $D$ has \emph{natural} density $1$, we fix $\varepsilon > 0$ and note that, since $f(N) \to \infty$ as $j \to \infty$, there exists $j_0(\varepsilon)$ such that $\lvert D_j \rvert \geqslant 2^{j-1}(1-\varepsilon)$ for all $j \geqslant j_0(\varepsilon)$.  In particular,
\begin{align*}
\sum_{\substack{n \leqslant X \\ n \in D}} 1 &\geqslant \sum_{j_0(\varepsilon) \leqslant j \leqslant \lceil \log_2 X \rceil} \vert D_j \vert - \sum_{X<n \leqslant 2^{\lceil \log_2 X \rceil}} 1. \\
&\geqslant (1-\varepsilon) \left(2^{\lceil \log_2 X \rceil} - 2^{j_0(\varepsilon)-1}\right) + X - 2^{\lceil \log_2 X \rceil}.
\end{align*}
Simplifying, we see that
\[ \liminf_{X \to \infty} \frac{\vert D \cap [1,X]\vert}{X} \geqslant \liminf_{X \to \infty} \frac{X - \varepsilon 2^{\lceil \log_2 X \rceil} - 2^{j_0(\varepsilon)}}{X} \geqslant 1 -2\varepsilon,\]
which implies that $D$ has natural density $1$, since $\varepsilon$ was arbitrary.
\end{proof}

Secondly, we prove that $\beta(n)$ is asymptotically large within $D$.

\begin{lemma} \label{second_lemma}
If $f$ grows slowly enough, then $\beta(n) \sim n$ as $n \to \infty$ within $D$.
\end{lemma}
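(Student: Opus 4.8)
The plan is to quantify the equidistribution argument used in Lemma~\ref{thm:beta_asymp_along_AP}, replacing its qualitative application of the Equidistribution Theorem with an effective estimate, so that the rate at which $\log_q p$ equidistributes can be tracked uniformly over the relevant primes $p, q$. Fix $n \in D_j$, so $n \in (2^{j-1}, 2^j]$ has two distinct prime factors $p, q \leqslant f(2^{j-1})$; write $N = 2^{j-1}$ as before. Exactly as in the proof of Lemma~\ref{thm:beta_asymp_along_AP}, set $l = \lfloor \log_p n \rfloor$ and seek $n_0 \leqslant l$ with $0 < \{\log_q n - n_0 \log_q p\} < \varepsilon$; this will give $\beta(n) \geqslant p^{n_0} q^{\lfloor \log_q n - n_0 \log_q p\rfloor} > n\, q^{-\varepsilon}$, and hence $\beta(n)/n > f(N)^{-\varepsilon} \to 1$ once we also let $\varepsilon = \varepsilon(N) \to 0$ slowly. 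The point is that $l \geqslant \log_p n - 1 \geqslant \log_{f(N)} N - 1$ grows with $N$, so the orbit $\{n_0 \log_q p \bmod 1 : 0 \leqslant n_0 \leqslant l\}$ is long; the remaining issue is whether it is $\varepsilon$-dense, and for that we need a discrepancy bound that is uniform in $p, q \leqslant f(N)$.

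First I would invoke a quantitative equidistribution statement — the Erd\H{o}s--Tur\'an inequality together with a lower bound for the irrationality measure of $\log_q p$, or more simply a direct bound on $\|m \log_q p\|$ for $1 \leqslant m \leqslant l$. Since $p \neq q$ are primes, $q^a \neq p^b$ for all positive integers $a, b$, and by a Baker-type linear forms in logarithms estimate (or even an elementary argument using $|q^a - p^b| \geqslant 1$ when this quantity is nonzero) one gets $\| m \log_q p \| \gg_{} m^{-C}$ with an \emph{absolute} implied constant and exponent, valid for all primes $p, q \leqslant f(N)$ provided, say, $f(N) \leqslant \log N$. Feeding this into Erd\H{o}s--Tur\'an shows the discrepancy of $(\{m \log_q p\})_{m=0}^{l}$ is $O(l^{-\delta})$ for some absolute $\delta > 0$; in particular the orbit is $\varepsilon$-dense as soon as $l^{-\delta} < \varepsilon$, i.e. once $l > \varepsilon^{-1/\delta}$. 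Since $l \geqslant \log_{f(N)} N - 1$, this holds for all sufficiently large $N$ provided $\varepsilon = \varepsilon(N)$ is chosen to decay slower than $(\log_{f(N)} N)^{-\delta}$ — e.g. $\varepsilon(N) = (\log\log N)^{-1}$ works if $f(N)$ grows slowly enough.

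Then I would assemble the pieces: choose $f$ to grow slowly enough that (i) the density statement of the previous lemma holds, (ii) $f(N) \leqslant \log N$ so the uniform irrationality-measure bound applies, and (iii) $(\log_{f(N)} N)^{\delta} \to \infty$ so that for every fixed target $\varepsilon$ eventually the orbit is $\varepsilon$-dense. Concretely, for any $\eta > 0$, all sufficiently large $n \in D$ satisfy $\beta(n) > n \cdot f(N)^{-\eta} > n(1-\eta')$ for a corresponding $\eta' \to 0$; combined with the trivial $\beta(n) < n$ this gives $\beta(n) \sim n$ within $D$. The main obstacle is establishing the \emph{uniform} (in $p, q$) quantitative equidistribution: one must ensure the implied constants in the discrepancy bound do not degrade as $p, q$ range up to $f(N)$, which is exactly why $f$ must be forced to grow slowly (slowly enough that $\log(p q) = O(\log\log N)$ is dwarfed by $l \asymp \log N / \log\log N$). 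Everything else — the choice of $n_0$, the conversion from a small fractional part to a lower bound on $\beta(n)$, and the diagonalisation of $\varepsilon$ against $N$ — is a routine repetition of the argument already given for Lemma~\ref{thm:beta_asymp_along_AP}.
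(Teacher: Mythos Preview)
Your approach is essentially the paper's: both reduce to quantitative equidistribution of $\{a\log_q p\}$ via the Erd\H{o}s--Tur\'an inequality, and both handle uniformity in $p,q\leqslant f(n)$ by the soft device of choosing $f$ to grow slowly enough. The paper is in fact even less explicit than you are --- it simply bounds the inner exponential sum by the geometric-series estimate $\ll 1/\|k\log_q p\|$, calls the resulting Erd\H{o}s--Tur\'an error $G(K,p,q)$, and observes that since $G$ is some fixed function independent of $n$ one may choose $f$ so that $G(\log^2 f(n),f(n),f(n))\log^2 f(n)=o(\log n)$; Baker's theorem is mentioned only parenthetically, as an optional way to make this effective.

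One correction: your claim that the elementary bound $|q^a-p^b|\geqslant 1$ already yields $\|m\log_q p\|\gg m^{-C}$ with an absolute exponent is false --- that bound only gives $\|m\log_q p\|\gg p^{-m}/\log q$, which is exponentially small in $m$. Even Baker's theorem does not give an absolute exponent: one gets roughly $\|m\log_q p\|\gg m^{-c\log p\log q}$, so the ``$\delta$'' in your discrepancy estimate $O(l^{-\delta})$ in fact shrinks like $1/(\log f(N))^2$ as $p,q$ range up to $f(N)$. This is harmless for the lemma as stated, since you correctly identify at the end that the whole point is to make $f$ grow slowly enough to absorb whatever dependence on $p,q$ arises; but the intermediate assertion of absolute constants should be dropped, and the argument then collapses to exactly the non-effective one the paper gives.
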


\begin{proof} Our proof presents a more quantitative adaptation of the argument used in Lemma~\ref{thm:beta_asymp_along_AP}.  Let $\varepsilon > 0$, and fix $n \in D$.  By the definition of $D$, there exist distinct primes $p,q < f(n)$ for which $p,q \mid n$.  We will show that, provided $n$ is large enough in terms of $\varepsilon$, there exist non-negative integers $a$ and $b$ for which
\[e^{\varepsilon} \geqslant \frac{n}{p^a q^b} > 1.\]
Since $p^a q^b \leqslant \beta(n) < n$, and $\varepsilon$ is arbitrary, this will complete the proof.

Taking logarithms, it suffices to find non-negative integers $a$ and $b$ for which
\[\frac{\varepsilon}{\log q} \geqslant \log_q n - a \log_q p - b > 0.\]
Setting $L = \lfloor \log_p n \rfloor$, it will be enough to prove that the sequence of fractional parts $\{ \{a \log_q p\} : a \in [1,L]\}$ contains an element in every interval modulo $1$ of length $\varepsilon/\log q$.  Since $p,q \leqslant f(n)$, we reduce our theorem to the following claim:

\begin{claim} Let $L'=\lfloor \log n/\log f(n) \rfloor$.  Then $S=\{ \{a \log_q p\} : a \in [1,L']\}$ contains an element in every interval modulo $1$ of length $\varepsilon/\log f(n)$, provided $f(n)$ grows slowly enough.
\end{claim}

The proof of this claim follows from the Erd\H{o}s-Tur\'an inequality (Corollary 1.1 of \cite{Mo94}). Indeed, for any interval $I$ modulo $1$ of length $\varepsilon/\log f(n)$, we have
\begin{align} \label{eq:erdos_turan_bound}
\left\vert \vert S\cap I\vert - \frac{\varepsilon L'}{\log f(n)} \right\vert \ll \frac{L'}{K+1} + \sum_{k \leqslant K} \frac{1}{k} \bigg\vert \sum_{a=1}^{L'} e^{2\pi i a k\log_q p} \bigg\vert
\end{align}
for any integer $K \geqslant 1$. It suffices to show that we may choose a $K$ such that the right-hand side in~\eqref{eq:erdos_turan_bound} is $o(L'/\log f(n))$ as $n \to \infty$.

Choosing $K = \lfloor \log^2 f(n)\rfloor $ ensures that $L'/(K+1) = o(L'/\log f(n))$.  As for the second term in~\eqref{eq:erdos_turan_bound}, bounding the sum over $a$ as a geometric series gives
\[\sum_{k \leqslant K} \frac{1}{k} \bigg\vert \sum_{a=1}^{L'} e^{2\pi i a k\log_q p} \bigg\vert \leqslant G(K, p,q)\]
for some function $G$ that is independent of $L^\prime$. We may assume without loss of generality that $G$ is increasing in each variable. Then $$G(K,p,q) \ll G( \log^2 f(n), f(n), f(n) ),$$ so it suffices to show that
\begin{equation}
\label{growth function}
G\left(\log^2 f(n), f(n), f(n)\right) = o\left(\frac{L'}{\log f(n)}\right).
\end{equation}
Recalling the definition of $L'$, this is equivalent to showing
\[ G\left(\log^2 f(n), f(n), f(n)\right) \cdot \log^2 f(n)= o\left(\log n \right).\]
Yet $G$ is simply some absolute function, so if $f$ grows slowly enough then (\ref{growth function}) will hold. (If one so wished, one could quantify this growth condition using Baker's result \cite{Ba68} on linear forms of logarithms of primes). This proves the claim, and hence the lemma. 
\end{proof}
Combining Lemma~\ref{second_lemma} with Theorem~\ref{thm:main theorem} yields Theorem \ref{thm: ell nearly always maximal epsilon free}.
\end{proof}
\vspace{5 mm}
\bibliographystyle{plain}
\bibliography{aprestricteddigits}

\end{document}